\newtheorem{theo}{Theorem}[section]
\newtheorem{lemma}[theo]{Lemma}
\newtheorem{conj}[theo]{Conjecture}
\newtheorem{defn}[theo]{Definition}
\newcommand{\eps}{\varepsilon}
\newcommand{\R}{\mathbb{R}}
\newcommand{\C}{{\mathcal C}}
\newcommand{\Hy}{{\mathcal H}}
\newcommand{\Part}{{\mathcal P}}
\newcommand{\ab}{{\bf a}}
\newcommand{\Qb}{{\bf Q}}
\newcommand{\1}{{\bf 1}}
\newcommand{\reld}{d^*}
\newcommand{\reldeg}{\overline{\deg}} 
\newcommand{\cJ}{\mathcal{J}} 
\newcommand{\NATS}{\mathbb{N}}
\def\noproof{{\unskip\nobreak\hfill\penalty50\hskip2em\hbox{}\nobreak\hfill%
       $\square$\parfillskip=0pt\finalhyphendemerits=0\par}\goodbreak}
\def\endproof{\noproof\bigskip}
\title{The minimum vertex degree for an almost-spanning tight cycle in a $3$-uniform hypergraph}
  \author[O. Cooley]{Oliver Cooley} \address{
    Graz University of Technology, Institute of Discrete Mathematics,
    Steyrer\-gasse~30, 8010 Graz, Austria
  }
  \email{cooley@math.tugraz.at}
  \author[R. Mycroft]{Richard Mycroft} \address{
    School of Mathematics, University of Birmingham, Edgbaston, Birmingham B15
    2TT, UK
  }
  \email{r.mycroft@bham.ac.uk}
  \thanks{RM was supported by EPSRC grant EP/M011771/1.
  }
\date{\today}
\begin{document}

\begin{abstract}
We prove that any $3$-uniform hypergraph whose minimum vertex degree is at least $\left(\frac{5}{9} + o(1) \right)\binom{n}{2}$ admits an almost-spanning tight cycle, that is, a tight cycle leaving $o(n)$ vertices uncovered. The bound on the vertex degree is asymptotically best possible. Our proof uses the hypergraph regularity method, and in particular a recent version of the hypergraph regularity lemma proved by Allen, B\"ottcher, Cooley and Mycroft. 
\end{abstract}
\maketitle

\section{Introduction}

The study of Hamilton cycles in graphs and hypergraphs has been an active area of research for many years, going back to Dirac's celebrated theorem~\cite{dirac} that any graph on $n \geq 3$ vertices with minimum degree at least $n/2$ admits a Hamilton cycle. Finding analogues of this theorem for $k$-graphs (\emph{i.e.} $k$-uniform hypergraphs) is one of the major directions of recent research in this area. To discuss these results we use several commonly-used standard terms, definitions of which can be found in Section~\ref{sec:defs}. For a more expository presentation of recent research in this area we refer the reader to the surveys of K\"uhn and Osthus~\cite{KOSurvey}, R\"odl and Ruci\'nski~\cite{RRSurvey} and Zhao~\cite{ZhaoSurvey}.

The first analogues of Dirac's theorem for $k$-graphs were expressed in terms of minimum codegree, beginning with the work of Katona and Kierstead~\cite{hh-katona99} who established the first non-trivial bounds on the codegree Dirac threshold for a tight Hamilton cycle in a $k$-graph for $k \geq 3$. R\"odl, Ruci\'nski and Szemer\'edi~\cite{hh-rodl06, hh-rodl08} then improved this bound by determining the asymptotic value of this threshold, first for $k=3$ and then for any $k \geq 3$. The asymptotic codegree Dirac threshold for an $\ell$-cycle for any $1 \leq \ell < k$ such that $\ell$ divides $k$ follows as a consequence of this. This left those values of $\ell$ for which $k-\ell$ does not divide $k$, in which cases the Dirac threshold was determined asymptotically through a series of works by K\"uhn and Osthus~\cite{pp-kuhn06-cherry}, Keevash, K\"uhn, Mycroft and Osthus~\cite{hh-keevash11}, H\`an and Schacht~\cite{hh-han10} and K\"uhn, Mycroft and Osthus~\cite{KMO14}. These results can all be collectively described by the following theorem, whose statement gives the asymptotic codegree Dirac threshold for any $k$ and $\ell$. 

\begin{theo}[\cite{hh-han10, hh-keevash11, KMO14, pp-kuhn06-cherry, hh-rodl06, hh-rodl08}] \label{codeg}
  For any $k \geq 3$, $1 \leq \ell < k$ and $\eta > 0$, there exists~$n_0$ such that if $n \geq
  n_0$ is divisible by $k-\ell$ and $H$ is a $k$-graph on $n$ vertices with $$\delta(H) \geq 
  \begin{cases}
    \left( \frac{1}{2} + \eta \right) n& \mbox{ if $k-\ell $ divides $k$,} \\
    \left(\frac{1}{\lceil 
    \frac{k}{k-\ell} \rceil(k-\ell)}+\eta\right) n & \mbox{otherwise,}
  \end{cases} 
  $$ 
  then $H$ contains a Hamilton $\ell$-cycle. Moreover, in each case this condition is best possible up to the $\eta n$ error term.
\end{theo}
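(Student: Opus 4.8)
The plan is to prove the upper bound by the absorbing method and the lower bound by explicit extremal hypergraphs. Write $d := k-\ell$ for the gap of the cycle, so a Hamilton $\ell$-cycle on $n$ vertices (with $d \mid n$) has exactly $n/d$ edges, consecutive ones overlapping in $\ell$ vertices, and each vertex lies in at most $\lceil k/d \rceil$ of these edges. For the upper bound I would combine three ingredients in the usual way: (i) an \emph{absorbing} $\ell$-path $P_{\mathrm{abs}}$ on $o(n)$ vertices which, for any vertex set $V'$ with $|V'| \le \beta n$ and $d \mid |V'|$, can be rearranged into an $\ell$-path on exactly $V(P_{\mathrm{abs}}) \cup V'$ with the same pair of ends; (ii) a \emph{connecting lemma} allowing one to string $O(1)$ disjoint $\ell$-paths together end-to-end, using a bounded number of extra vertices per join drawn from a small pre-reserved reservoir; and (iii) an \emph{almost-spanning cover}, i.e.\ a bounded collection of disjoint $\ell$-paths covering all but $o(n)$ vertices of $H$. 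Given these, one connects $P_{\mathrm{abs}}$ and the paths from (iii) into a single long $\ell$-path missing at most $\beta n$ vertices, and then absorbs the remainder to close up a Hamilton $\ell$-cycle.

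Ingredients (i) and (ii) are the routine part: each needs only that there are many ``connectors'' between any two prescribed local configurations, which the codegree hypothesis supplies comfortably above the threshold. For (i), call a constant-size $\ell$-path $A_D$ an \emph{absorber for a $d$-set $D$} if inserting $D$ at a fixed interior slot again yields an $\ell$-path with the same ends; the codegree condition gives $\Omega(n^{c})$ absorbers for every $D$ by a short greedy count, so a random subfamily of suitable size is with positive probability disjoint and contains many absorbers for each $D$, and concatenating these via (ii) produces $P_{\mathrm{abs}}$. For (ii), the core is that for all but a negligible fraction of pairs of $\ell$-path ends there are polynomially many internally disjoint short connectors, which follows from the codegree bound by supersaturation; one then connects greedily, always taking connector vertices from the reservoir.

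Ingredient (iii) is where the exact threshold is forced and, I expect, the main obstacle. Here I would apply the (hypergraph) regularity lemma to $H$, pass to the reduced $k$-graph, note that it inherits essentially the same relative codegree condition, and argue that this condition yields the structure needed to tile almost all of $H$ by long $\ell$-paths. When $d \mid k$ the relevant structure is roughly a near-perfect fractional matching in the reduced $k$-graph, which exists once the relative codegree exceeds $1/2$; when $d \nmid k$ one instead needs an appropriate divisibility/lattice structure, which exists once the relative codegree exceeds $1/(\lceil k/d\rceil d)$ --- below that value a space-barrier obstruction appears. A cleaning/density argument then converts the reduced structure into the required almost-spanning family of $\ell$-paths inside the dense regular complexes. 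The delicate, case-sensitive point --- and the reason the statement is assembled from several papers --- is making this work at exactly the stated threshold for every pair $(k,\ell)$, and, in the regime where the extremal examples are divisibility rather than space barriers (in particular for tight cycles), separately handling hypergraphs that lie close to those barriers.

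For the lower bound, the ``otherwise'' case is witnessed by a space barrier: take $A$ with $|A| = \lceil n/(\lceil k/d\rceil d)\rceil - 1$ and let $H$ consist of all $k$-sets meeting $A$; then $\delta(H) = |A|$, since a $(k-1)$-set disjoint from $A$ extends only into $A$, yet a Hamilton $\ell$-cycle would need $n/d$ edges each meeting $A$ while each vertex of $A$ lies in at most $\lceil k/d\rceil$ cycle-edges --- impossible. When $d \mid k$ this construction only reaches $\approx n/k$ and is beaten by a divisibility barrier analogous to the tight-cycle one: take $A$ with $|A| = \lfloor n/2\rfloor$ and $H$ the set of $k$-sets $e$ with $|e\cap A|$ of a fixed parity; then $\delta(H) \ge (1/2-o(1))n$, but tracing a Hamilton $\ell$-cycle forces the $A$-pattern to be periodic in a way incompatible with $|A| = \lfloor n/2\rfloor$ when $k \ge 3$ (choosing $n$ in the appropriate residue class). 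That these constructions are optimal up to the $\eta n$ term --- essentially the space-barrier/divisibility-barrier dichotomy for tilings --- completes the lower bound.
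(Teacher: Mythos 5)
The paper does not prove Theorem~\ref{codeg}; it is stated as a summary of earlier work and the citations \cite{hh-han10, hh-keevash11, KMO14, pp-kuhn06-cherry, hh-rodl06, hh-rodl08} are its only justification. So there is no proof in the paper against which to compare your attempt. What you have written is a plausible high-level summary of the absorbing method as it is actually deployed across those references, and the two families of extremal examples you describe (a space barrier when $k-\ell \nmid k$ and a parity barrier when $k-\ell \mid k$) are indeed the constructions that match the stated thresholds. It is not, however, close to being a proof: ingredient~(iii), the almost-spanning cover at precisely the stated codegree and in particular the $\bigl(\tfrac{1}{2}+\eta\bigr)n$ regime for tight cycles, occupies the bulk of \cite{hh-rodl06, hh-rodl08} and of the loose/$\ell$-cycle papers, and you give it only a one-sentence gesture.

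One concrete inaccuracy worth flagging: for the $k-\ell \mid k$ lower bound your argument as stated (``tracing the cycle forces the $A$-pattern to be periodic'') really only works in the tight case $d=1$, where consecutive edges change $|e\cap A|$ by at most one. For general $d=k-\ell>1$ with $d\mid k$, consecutive edges can change $|e\cap A|$ by any value in $\{-d,\ldots,d\}$, so no per-edge parity constraint propagates along the cycle. The correct reason the parity construction works for every such $\ell$ is different: a Hamilton $\ell$-cycle with $d\mid k$ contains a perfect matching $M$ (take every $(k/d)$-th edge), and if $|A|$ is odd while every edge $e$ has $|e\cap A|$ even, then $\sum_{e\in M}|e\cap A|$ is even but must equal $|A|$, which is odd. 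That global parity count, not any periodicity of the pattern along the cycle, is what rules out the Hamilton $\ell$-cycle and justifies the $n/2$ threshold in all cases with $d\mid k$. Also note that to make $\delta(H)\ge (1/2-o(1))n$ you need to choose the parity of $|A|$, not simply set $|A|=\lfloor n/2\rfloor$.
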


More recently the exact codegree Dirac threshold (for large $n$) has been identified in some cases, namely for $k=3, \ell=2$ by R\"odl, Ruci\'nski and Szemer\'edi~\cite{RoRuSz09}, for $k=3, \ell=1$ by Czygrinow and Molla~\cite{CM}, for any $k \geq 3$ and $\ell < k/2$ by Han and Zhao~\cite{HZ}, and for $k=4$ and $\ell=2$ by Garbe and Mycroft~\cite{GaMy16}.

For other types of degree conditions much less is known. Bu\ss, H\`an and Schacht~\cite{BHS13} identified asymptotically the vertex degree Dirac threshold for a loose Hamilton cycle in a $3$-graph, following which Han and Zhao~\cite{hh-han14} established this threshold exactly (for large $n$). Very recently the asymptotic $(k-2)$-degree Dirac threshold for a Hamilton $\ell$-cycle in a $k$-graph for $1 \leq \ell < k/2$ was determined by Bastos, Mota, Schacht, Schnitzer and Schulenburg~\cite{BMSSS16}. 

Perhaps the most prominent outstanding open problem is to identify the vertex degree Dirac threshold for a tight Hamilton cycle in a $3$-graph. There is a qualitative difference between this problem and the results described above, in that vertex degree conditions only give information about edges containing a given vertex (in particular, many pairs may have degree zero), but a tight cycle requires us to find edges intersecting in pairs. Possibly due to this, progress on this problem has been slower. Glebov, Person and Weps~\cite{GPW} gave the first non-trivial upper bound on this threshold, showing it to be at most $(1-\eps)\binom{n}{2}$, where $\eps \approx 5 \times 10^{-7}$. R\"odl and Ruci\'nski~\cite{RR} subsequently improved this bound to around $0.92\binom{n}{2}$, and very recently R\"odl, Ruci\'nski, Schacht and Szemer\'edi~\cite{RRSSz} strengthened this result by showing that the the threshold is at most $0.8\binom{n}{2}$. However, this is still some way from the conjectured value of $(\tfrac{5}{9} + o(n)) \binom{n}{2}$ (which matches the lower bound provided by the best known construction).

\begin{conj}[\cite{RRSurvey}, Conjecture 2.18]\label{conj}
For every $\eta > 0$, there exists $n_0$ such that if $H$ is a $3$-graph on $n \geq n_0$ vertices with $\delta(H) \geq \left(\frac{5}{9} + \eta \right) \binom{n}{2}$, then $H$ contains a tight Hamilton cycle.
\end{conj}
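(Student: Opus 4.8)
\medskip
\noindent\emph{A proof strategy for Conjecture~\ref{conj}.}
The natural route is to combine the absorbing method of R\"odl, Ruci\'nski and Szemer\'edi with the almost-spanning tight cycle provided by the main theorem of this paper. Fix constants $0 < \gamma \ll \eta$ and suppose $\delta(H) \geq \left(\tfrac59 + \eta\right)\binom n2$. Call an ordered pair $(x,y)$ of distinct vertices \emph{good} if $\{x,y\}$ lies in at least $\tfrac{\eta}{4}n$ edges of $H$; since $\delta(H) > \tfrac12\binom n2$, every vertex forms a good pair with a positive fraction of the other vertices, and a tight path may be started or ended at any good pair.

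The first ingredient is a \emph{connecting lemma}: there is a constant $t = t(\eta)$ such that for any two disjoint good pairs $(a_1,a_2)$ and $(b_1,b_2)$, and any set $Z$ of at most $\sqrt\gamma\, n$ further vertices, $H$ contains a tight path from $(a_1,a_2)$ to $(b_1,b_2)$ with at most $t$ internal vertices, none of them in $Z$. As $\delta(H)$ exceeds $\tfrac12\binom n2$, a supersaturation argument counting short tight walks --- iterated a bounded number of times --- should deliver this. The subtlety is that, under a minimum \emph{vertex} degree hypothesis, many pairs have codegree zero, so the walk must remain among good pairs at every step; this is also why a bound above $\tfrac12\binom n2$, rather than something weaker, is used here.

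The second, and decisive, ingredient is the \emph{absorbing lemma}: $H$ contains a tight path $P_{\mathrm{abs}}$ on at most $\gamma n$ vertices, with good endpoint pairs, such that for every $W \subseteq V(H)\setminus V(P_{\mathrm{abs}})$ with $|W| \leq \gamma^2 n$ there is a tight path on $V(P_{\mathrm{abs}})\cup W$ with the same endpoint pairs as $P_{\mathrm{abs}}$. One builds $P_{\mathrm{abs}}$ from \emph{absorbers}: for a vertex $v$, a bounded set $S$ carrying triples so that both $S$ and $S\cup\{v\}$ are tight paths with one common pair of endpoints. If every vertex outside a negligible set lay in $\Omega(n^{|S|})$ absorbers, then a random family of absorbers, made vertex-disjoint and stitched together and to its live ends via the connecting lemma, would produce $P_{\mathrm{abs}}$. \textbf{Showing that each vertex has this many absorbers is the main obstacle, and is exactly where the argument is currently stuck.} In the codegree setting one simply extends edges greedily; here a positive proportion of pairs may lie in no edge at all, and in the extremal construction underpinning the $\tfrac59$ lower bound some vertices lie in very few absorbers --- indeed this is how that construction blocks a tight Hamilton cycle. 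Hence one needs either a structural dichotomy --- a stability result showing that only near-extremal $3$-graphs are absorber-poor, which the $+\eta$ slack then excludes --- or a more robust, auxiliary-hypergraph (``lattice-based'') absorber; supplying one of these for \emph{all} $3$-graphs with $\delta(H)$ between $\tfrac59\binom n2$ and roughly $\tfrac45\binom n2$ is the content still missing.

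Granting the two lemmas, the remainder is routine assembly. Take a random reservoir $R$ of $\gamma^3 n$ vertices; after deleting $V(P_{\mathrm{abs}})$ the remaining $3$-graph still has minimum vertex degree at least $\left(\tfrac59+\tfrac{\eta}{2}\right)\binom{n'}{2}$ on its $n'$ vertices, so by the main theorem of this paper --- in its tight-path form, obtained by deleting two consecutive vertices of the almost-spanning tight cycle and tracking the endpoints through the regularity argument --- it contains a tight path $P$ covering all but at most $\tfrac12\gamma^2 n$ of $V(H)\setminus(V(P_{\mathrm{abs}})\cup R)$, with good endpoint pairs. Applying the connecting lemma twice, through $R$, joins $P_{\mathrm{abs}}$ and $P$ into a single tight path $P^{*}$ whose set $W$ of uncovered vertices --- those missed by $P$, together with the unused part of $R$ --- has size at most $\gamma^2 n$; finally the absorbing property of $P_{\mathrm{abs}}\subseteq P^{*}$ swallows $W$ and closes $P^{*}$ into a tight Hamilton cycle of $H$. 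The one genuinely open step in this plan is the absorber count.
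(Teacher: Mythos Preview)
There is no proof in the paper to compare against: the statement you have been asked to prove is precisely Conjecture~\ref{conj}, which the paper states as an \emph{open problem} and does not resolve. The paper proves only the weaker Theorem~\ref{main} (an almost-spanning tight cycle), and explicitly remarks that this ``represents a significant step towards a full proof of Conjecture~\ref{conj}'' via an absorbing argument --- which is exactly the programme you have sketched.

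Your outline is an honest and accurate description of the state of the problem rather than a proof, and you have correctly located the genuine obstruction. The almost-spanning cycle is supplied by Theorem~\ref{main}; the assembly via reservoir and connecting lemma is standard once the pieces are in place. The step you flag as missing --- showing that every vertex has $\Omega(n^{|S|})$ absorbers under only a minimum \emph{vertex} degree hypothesis of $(\tfrac{5}{9}+\eta)\binom{n}{2}$ --- is indeed the heart of the matter and is not known. Your diagnosis of why it is hard (many pairs of codegree zero, the behaviour near the extremal construction) is exactly right, and your two suggested routes (stability plus a direct argument in the non-extremal case, or a lattice-based absorbing scheme) are the natural lines of attack. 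One small caution on the connecting lemma: under a pure vertex-degree assumption even this step is not as routine as in the codegree setting, for the same reason --- tight walks must stay on pairs of positive codegree --- so ``a supersaturation argument \ldots\ should deliver this'' is optimistic and would itself require real work. In short, your write-up is a fair summary of a plausible strategy, but it is not a proof, and the paper does not claim one either.
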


(Actually, the original conjecture was that for any $k \geq 3$ the vertex degree threshold for a tight Hamilton cycle in a $k$-graph is asymptotically equal to that which forces a perfect matching. However, this was disproved for $k \geq 4$ by Han and Zhao~\cite{HZ2}). The main result of this paper is the following theorem, which states that the assumptions of Conjecture~\ref{conj} suffice to ensure the existence of an almost-spanning tight cycle.

\begin{theo}\label{main}
For every $\eta > 0$ there exists $n_0$ such that if $H$ is a $3$-graph on $n \geq n_0$ vertices with $\delta(H) \geq \left(\frac{5}{9} + \eta \right) \binom{n}{2}$ then $H$ contains a tight cycle of length at least $(1-\eta) n$. 
\end{theo}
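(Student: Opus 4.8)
The plan is to use the hypergraph regularity method in the form of the regular slices framework of Allen, B\"ottcher, Cooley and Mycroft. First I apply the hypergraph regularity lemma to $H$, obtaining a partition of $V(H)$ into an exceptional set of size $o(n)$ and clusters $V_1,\dots,V_t$ of equal size, together with a regular partition of the pairs within and between clusters with respect to which $H$ is regular; this yields a reduced $3$-graph $R$ on vertex set $[t]$ whose edges record those triples of clusters that, together with a compatible choice of pair-classes, span a regular triple of positive density in $H$. A routine counting argument transfers the degree hypothesis: all but $o(t)$ vertices $i$ of $R$ satisfy $\deg_R(i)\ge\bigl(\tfrac59+\tfrac\eta2\bigr)\binom t2$, and after moving the atypical clusters into the exceptional set we may assume $\delta(R)\ge\bigl(\tfrac59+\tfrac\eta2\bigr)\binom t2$. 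It then suffices to find a suitable structure in $R$ that can be ``blown up'' to a long tight cycle in $H$ using the tight-path embedding lemma for regular complexes provided by Allen, B\"ottcher, Cooley and Mycroft.

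The structure I aim for is a \emph{well-spread closed tight walk} in $R$: a closed tight walk $W=(w_1,\dots,w_L)$ — so that every cyclically consecutive triple $\{w_j,w_{j+1},w_{j+2}\}$ is an edge of $R$, with the pair-classes along $W$ chosen consistently — in which no vertex of $R$ is visited more than $(1+\eta/4)L/t$ times. Given such a $W$, I concatenate $M$ copies of it, choosing fresh vertices in each cluster at each pass and taking $M$ as large as the most-frequently-visited cluster permits; since each cluster has at least $(1-o(1))n/t$ vertices and the visit counts are at most $(1+\eta/4)L/t$, this forces $M\ge(1-\eta/3)n/L$, so the embedding lemma yields a tight cycle in $H$ on $ML\ge(1-\eta)n$ vertices. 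The proof therefore reduces to the purely combinatorial assertion that every $3$-graph $R$ on $t$ vertices with $\delta(R)\ge\bigl(\tfrac59+\tfrac\eta2\bigr)\binom t2$ contains a well-spread closed tight walk; equivalently, to the fractional relaxation that $R$ admits a circulation on its \emph{pair digraph} (vertices the ordered pairs, an arc $(x,y)\to(y,z)$ for each edge $\{x,y,z\}$) whose induced weight distribution on $V(R)$ has $\ell_\infty$-norm at most $(1+o(1))/t$ times its total mass, an actual walk then being recovered by clearing denominators and splicing the resulting closed walks together.

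Establishing this combinatorial statement is where the constant $5/9$ enters, and is the main obstacle. I would first show that $R$ is essentially tightly connected, with all but $o(t)$ vertices in a single tight component; this follows from $\delta(R)>\binom t2/2$, since a vertex lying only in small tight components, or separated from the bulk of $R$ by few pairs, would have too small a link. This already produces a closed tight walk through almost all of $V(R)$, but a typically very unbalanced one, and the real work is to re-balance it. For this I pass to the fractional (flow) formulation and argue by linear programming duality: if $R$ admits no well-spread fractional closed tight walk, then the dual certificate yields a potential on the ordered pairs together with weights on $V(R)$ from which, after cleaning up, one extracts a partition $V(R)=A\cup B$ with $|A|\le\bigl(\tfrac13-\Omega(\eta)\bigr)t$ such that all but $o(t^2)$ edges of $R$ meet $A$. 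But then some vertex of $B$ has at most $\binom{|A|}2+|A|\,|B|+o(t^2)=\bigl(1-(|B|/t)^2+o(1)\bigr)\binom t2<\bigl(\tfrac59+\tfrac\eta2\bigr)\binom t2$ pairs in its link, contradicting the degree assumption. This obstruction is precisely the reduced picture of the extremal construction — take $V(H)=A\cup B$ with $|A|=\bigl(\tfrac13-\eps\bigr)n$ and let $E(H)$ be all triples meeting $A$, so that $\delta(H)\ge\bigl(\tfrac59-O(\eps)\bigr)\binom n2$ while no tight cycle has length above $(1-3\eps)n$, each vertex of $A$ lying in only three edges of any given tight cycle — which is why $\tfrac59$ is asymptotically optimal. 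Combining the regularity reduction, the combinatorial existence of a well-spread closed tight walk, and the embedding lemma completes the proof. The delicate points will be the precise form of the dual certificate and the bookkeeping required to splice a circulation into a single closed tight walk compatible with the pair-class structure of the regular slice; I also note that the step missing between this almost-spanning result and the full Hamilton cycle of Conjecture~\ref{conj} is essentially an absorbing argument, which the present approach does not attempt.
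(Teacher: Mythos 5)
Your high-level strategy — a regular slice of $H$, transfer of the degree condition to the reduced $3$-graph, a balanced fractional structure there, and a blow-up to a long tight cycle — is the same as the paper's. The paper's fractional object is a tightly-connected perfect fractional matching (exactly the input to the Cycle Embedding Lemma of Allen--B\"ottcher--Cooley--Mycroft, Lemma~\ref{lem:emb}), and the crucial Lemma~\ref{fracmatch} that produces it goes via Farkas' lemma together with a structural lemma about link graphs (Lemma~\ref{graphmeet}). That structural lemma uses the Erd\H{o}s--Gallai theorem to find, for every vertex $u$, a matching of size $t/3$ inside the largest component $C_u$ of $L(u)$, shows that $v(C_u)>2t/3$, and — crucially — shows that any two such components $C_u$, $C_v$ share an edge, so all the $C_u$ land in one tight component of the reduced graph. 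Farkas' lemma then gives a dual vector, and pairing the matching in $C_u$ (for the maximal-weight vertex $u$) with the uncovered vertices yields a contradiction.

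The gap in your proposal is precisely the replacement for this lemma. Your plan is to extract, from the LP dual of a circulation-with-bounded-vertex-load problem on the pair digraph, a vertex bipartition $V(R)=A\cup B$ with $|A|\le\bigl(\tfrac13-\Omega(\eta)\bigr)t$ such that almost all edges meet $A$. The dual of that LP lives on ordered pairs (node potentials) plus per-vertex load multipliers, and passing from such a certificate to a clean bipartition of $V(R)$ is not a routine ``cleaning up''; you would need to prove a stability/structure statement here, and you have not indicated how. Moreover, the circulation LP presupposes that you are already working inside a single tight component, so you must first establish that almost all of $R$ is tightly connected — and your claim that this ``follows from $\delta(R)>\binom t2/2$'' is false: the paper's Lemma~\ref{graphmeet}(i) and (iv), which give a single large component of $L(u)$ and force the components $C_u$ to overlap, both rely essentially on the density being above $5/9$, and the arguments are tight at that value. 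A smaller side issue: the ABCM Cycle Embedding Lemma takes a tightly-connected fractional matching, not a closed tight walk, so you would need either to convert your well-spread walk into a matching (possible, at a small loss) or to reprove a walk-based embedding lemma; this should be made explicit. The overall architecture is sound, but the central combinatorial input — that $5/9$ forces the balanced fractional structure — is where the work is, and your proposal leaves it as a heuristic.
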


The following example shows that Conjecture~\ref{conj} and Theorem~\ref{main} are asymptotically best possible. Fix $\eta > 0$, take disjoint sets $A$ and $B$ with $|A| =  \lfloor\tfrac{(1-\eta) n-1}{3}\rfloor$ and $|B| = n-|A|$, and take $H$ to be the $3$-graph on vertex set $A \cup B$ whose edges are all triples which intersect $A$. Then for any tight cycle $C$ in $H$, any three consecutive vertices in $C$ must include a vertex of $A$, so the length of $C$ is at most $3|A| < (1-\eta)n$. However, it is easily checked that $\delta(H) = \binom{n-1}{2} - \binom{|B|-1}{2} \approx (\tfrac{5}{9}-\tfrac{4}{3}\eta - \eta^2)\binom{n}{2}$.

Many of the Dirac thresholds mentioned above were established by absorbing arguments consisting of two parts: a long cycle lemma which states that the given $k$-graph contains an almost-spanning cycle, and an absorbing lemma which allows us to include in this cycle a so-called absorbing path, which can `absorb' the small set of leftover vertices to transform the almost-spanning cycle into a Hamilton cycle. In this way Theorem~\ref{main} represents a significant step towards a full proof of Conjecture~\ref{conj}. Furthermore, the proof of Theorem~\ref{main} provides a clear illustration of how the the recent `Regular Slice Lemma' of Allen, B\"ottcher, Cooley and Mycroft~\cite{ABCM} may be used to prove embedding results for uniform hypergraphs. In our opinion the proof of Theorem~\ref{main} is significantly more concise and notationally simpler than it would have been using previous hypergraph regularity methods.

This paper is organised as follows. In Section~\ref{sec:defs} we give the necessary definitions, then in Section~\ref{sec:matchings} we prove that any $3$-graph satisfying the minimum vertex degree condition of Theorem~\ref{main} admits a tightly-connected perfect fractional matching (Lemma~\ref{fracmatch}). In Section~\ref{sec:regandcycle} we introduce the hypergraph regularity theory we use from~\cite{ABCM}, including the Regular Slice Lemma, which returns a `reduced $k$-graph' which almost inherits the minimum vertex degree of the original $k$-graph, and the Cycle Embedding Lemma, which given a large tightly-connected fractional matching in the reduced $k$-graph, returns a long tight cycle in the original $k$-graph. Finally, in Section~\ref{sec:mainproof} we combine the results of the previous two sections to complete the proof of Theorem~\ref{main}.

\section{Definitions and Notation} \label{sec:defs}

A \emph{$k$-graph} $H$ consists of a set of vertices $V(H)$ and a set of edges $E(H)$, where each edge consists of $k$ vertices. So a $2$-graph is a simple graph. We write $v(H) := |V(H)|$ and $e(H) := |E(H)|$ for the number of vertices and edges respectively. Also, we write $e \in H$ to mean that $e$ is an edge of $H$. Given any integer $1 \leq \ell < k$, we say that a $k$-graph $C$ is an \emph{$\ell$-cycle} if the vertices of $C$ may be cyclically ordered in such a way that every edge of $C$ consists of $k$ consecutive vertices and each edge intersects the subsequent edge (in the natural ordering of the edges) in precisely $\ell$ vertices. In particular, $(k-1)$-cycles and $1$-cycles are known as \emph{tight cycles} and \emph{loose cycles} respectively. A \emph{Hamilton $\ell$-cycle} in a $k$-graph $H$ is a spanning subgraph of $H$ which is an $\ell$-cycle; similarly as before we also speak of tight Hamilton cycles and loose Hamilton cycles. The \emph{length} of a cycle is the number of edges it contains; note that for tight cycles this is identical to the number of vertices. 

Given a $k$-graph $H$ and a set $S \subseteq V(H)$, the \emph{degree} of $S$,
denoted $d_H(S)$ (or $d(S)$ when $H$ is clear from the context), is the number
of edges of $H$ which contain $S$ as a subset. For $1\leq s < k$ the
\emph{minimum $s$-degree of $H$}, denoted $\delta_s(H)$, is then defined to be
the minimum of $d(S)$ taken over all sets $S \subseteq V(H)$ of size $s$.
In particular, we refer to the minimum $1$-degree as the \emph{minimum vertex degree}
of $H$, and to the minimum $(k-1)$-degree as the \emph{minimum codegree} of $H$.
The principal notion of minimum degree used in this paper is that of vertex degree,
so we adopt the convention that $\delta(H)$ (without the subscript) will always
refer to the minimum vertex degree of $H$ (note that this coincides with the usual
notion of minimum degree for graphs). For any vertex $v \in V(H)$, the \emph{link graph}
of $v$, denoted $L_H(v)$ (or $L(v)$ if $H$ is clear from the context) is the
$(k-1)$-graph on vertex set $V(H)$ whose edges are all $(k-1)$-tuples $S$ for
which $S \cup \{v\} \in H$.  (Note: it will be convenient later on that we did
\emph{not} delete the vertex $v$ from its link graph even though by definition
it is not contained in any edges.)

A \emph{matching} in a $k$-graph $H$ is a set of disjoint edges of $H$;
it is \emph{perfect} if it covers every vertex of $H$ (in other words, if it has
precisely $v(H)/k$ edges). The LP relaxation of a matching is a \emph{fractional matching};
this is an assignment of a weight $w_e \in [0, 1]$ to each edge $e \in H$ such that
for any vertex $v \in V(H)$ we have $\sum_{e \ni v} w_e \leq 1$.
The \emph{total weight} of a fractional matching is $\sum_{e \in H} w_e$; similarly
as for integer matchings we say that a fractional matching is \emph{perfect} if it
has total weight $v(H)/k$.

Given edges $e$ and $e'$ of a $k$-graph $H$, a \emph{tight walk}
from $e$ to $e'$ is a sequence of not-necessarily-distinct edges
$e = e_0, e_1 \dots, e_{\ell-1}, e_\ell = e'$ such that $|e_i \cap e_{i-1}| = k-1$
for each $i \in [\ell]$. It is not hard to see that this gives an equivalence
relation on the edge set of $H$. The \emph{tight components} of $H$ are the
equivalence classes of this relation. We say that $H$ is \emph{tightly connected}
if it has only one tight component, that is, if there is a tight walk between any
pair of edges. Moreover, we say that a fractional matching in $H$ is
\emph{tightly-connected} if all edges of non-zero weight lie in the same tight component of $H$. 

We write $x \ll y$ to mean that for any $y > 0$ there exists $x_0 > 0$ such that the subsequent calculations hold for any $0 < x \leq x_0$. Similar statements with more variables are defined similarly. We write $x = y \pm z$ to mean that $y-|z| \leq x \leq y+|z|$, and $[n]$ to denote the set of integers from $1$ to $n$. We omit floors and ceilings wherever they do not affect the argument.

\section{Tightly-Connected Perfect Fractional Matchings}\label{sec:matchings}

In this section we show that any $3$-graph meeting the appropriate
minimum vertex degree condition admits a tightly-connected perfect fractional matching.
For this we use the following theorem of Erd\H{o}s and Gallai~\cite{EG}, which gives a tight
lower bound on the smallest possible size of a maximum matching in a graph of given
order and size.

\begin{theo}[\cite{EG}]\label{erdosgallai}
Any graph $G$ on $N$ vertices with $e(G) > \max \left\{\binom{2k-1}{2}, \binom{k-1}{2} + (k-1)(N-k+1) \right\}$ admits a matching of size $k$.
\end{theo}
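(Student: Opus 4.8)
The plan is to establish the contrapositive: if $G$ is a graph on $N$ vertices in which the largest matching has size at most $k-1$, then $e(G) \leq \max\{\binom{2k-1}{2},\, \binom{k-1}{2} + (k-1)(N-k+1)\}$. The engine of the argument will be the classical Tutte--Berge formula, which, since $G$ has no matching of size $k$, provides a set $S \subseteq V(G)$ such that $G-S$ has at least $N + |S| - 2(k-1)$ components of odd order (and hence at least that many components altogether).

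Writing $s = |S|$ and letting $C_1, \dots, C_m$ be the components of $G-S$, I would bound $e(G)$ by counting separately the edges inside $S$, the edges from $S$ to $V(G)\setminus S$, and the edges inside the $C_j$, which gives $e(G) \leq \binom{s}{2} + s(N-s) + \sum_{j=1}^{m}\binom{|C_j|}{2}$. The next step is to maximise the final sum over all admissible tuples of component orders, i.e.\ all positive integer partitions of $N-s$ having at least $N + s - 2(k-1)$ odd parts. Using $\binom{a}{2}+\binom{b}{2}\leq\binom{a+b}{2}$, this sum is largest when all but one part equals $1$ and the remaining part is as large as possible; the requirement on the number of parts (at least one per forced odd component) then caps that part at $2k-1-2s$, which is conveniently odd, so the extremal partition is indeed admissible. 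Hence $e(G) \leq \phi(s)$, where $\phi(s) := \binom{s}{2} + s(N-s) + \binom{2k-1-2s}{2}$, and since the number of components cannot exceed $N-s$ the same counting constraint forces $0 \leq s \leq k-1$.

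Finally I would check that $\phi$ is convex in $s$: its three summands contribute $+1$, $-2$ and $+4$ respectively to the second derivative, so $\phi'' = 3 > 0$. A convex function on an interval attains its maximum at an endpoint, and $\phi(0) = \binom{2k-1}{2}$ while $\phi(k-1) = \binom{k-1}{2} + (k-1)(N-k+1)$, so $e(G) \leq \max\{\binom{2k-1}{2},\, \binom{k-1}{2} + (k-1)(N-k+1)\}$, which is precisely what the contrapositive demands.

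I expect the only real care to be needed in the partition optimisation (correctly identifying the extremal partition and verifying that it meets the odd-parts constraint) and in a handful of degenerate instances --- for example $N \leq 2k-1$, where $G$ trivially has no matching of size $k$ and $\binom{N}{2} \leq \binom{2k-1}{2}$ makes the inequality automatic, and $s = k-1$, where the large component collapses to a single vertex. One could instead argue by induction on $k$, deleting from $G$ a maximum-matching edge whose endpoints have the smallest degree sum and applying the statement for $k-1$ to the rest; that works too, but monitoring both terms of the maximum through the induction --- and the crossover between the ``clique'' and ``near-star'' extremal regimes --- is appreciably more awkward than the Tutte--Berge route.
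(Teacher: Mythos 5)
This is a black-box citation: Theorem~\ref{erdosgallai} is attributed to Erd\H{o}s and Gallai's 1959 paper and the present paper offers no proof of it, so there is no internal argument to compare yours against. That said, your proof is correct. Taking the contrapositive, invoking the Tutte--Berge formula to extract $S$ with $o(G-S)\ge N+|S|-2(k-1)$, bounding $e(G)\le\binom{s}{2}+s(N-s)+\sum_j\binom{|C_j|}{2}$, and then optimising over component sizes is a clean route. Two small remarks. First, in the optimisation step you do not actually need the extremal partition to satisfy the parity condition: the constraint ``at least $N+s-2(k-1)$ odd components'' is only used through the weaker consequence ``at least $N+s-2(k-1)$ components'', and together with convexity of $\binom{\cdot}{2}$ this already gives $\sum_j\binom{|C_j|}{2}\le\binom{2k-1-2s}{2}$; your check that $2k-1-2s$ is odd is really a tightness/extremal-graph observation rather than part of the upper bound. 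Second, the deduction $0\le s\le k-1$ from $m\le N-s$ and $m\ge N+s-2(k-1)$, and the resulting restriction of the convex function $\phi(s)=\binom{s}{2}+s(N-s)+\binom{2k-1-2s}{2}$ (with $\phi''=1-2+4=3>0$, $\phi(0)=\binom{2k-1}{2}$, $\phi(k-1)=\binom{k-1}{2}+(k-1)(N-k+1)$) to its endpoints, are all in order, modulo the vacuous cases $N\le 2k-2$ which you correctly flag. This is not the route Erd\H{o}s and Gallai originally took (their 1959 argument is elementary and framed around paths and circuits), but the Tutte--Berge derivation is a standard and arguably tidier modern proof.
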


Our next lemma shows that the largest connected component in a graph of density
higher than $5/9$ covers at least two-thirds of the vertices and contains a large
matching, and moreover that
the largest connected components of two such graphs on the same vertex set must
share an edge. This lemma will later be applied to link graphs of vertices.

\begin{lemma} \label{graphmeet}
Let $G_1$ and $G_2$ be graphs on a common vertex set $V$ of size $n$, where $3$ divides $n$, such that $e(G_1), e(G_2) > \frac{5}{9}\binom{n}{2}$. Let $C_1$ and $C_2$ be largest connected components in $G_1$ and $G_2$ respectively. Then the following statements hold.
\begin{enumerate}[label=(\roman*)]
\item $v(C_i) > 2n/3$ for $i=1,2$,
\item $e(C_i) > \frac{4}{9}\binom{n}{2}$ for $i=1,2$,
\item $C_i$ contains a matching of size $n/3$ for $i=1,2$, and
\item $C_1$ and $C_2$ have an edge in common. 
\end{enumerate}
\end{lemma}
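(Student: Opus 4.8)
The plan is to establish the four statements in turn, writing $s_i := n - v(C_i)$ throughout; parts (i)--(iii) are routine extremal estimates, while (iv) requires a little more care. For (i), suppose some $G_i$ has all components of order at most $2n/3$. If $a_1 \ge a_2 \ge \cdots$ are the component orders then $e(G_i) \le \sum_j \binom{a_j}{2}$, and a short optimisation shows that, subject to $\sum_j a_j = n$ and each $a_j \le 2n/3$, this sum is maximised by the partition into parts $2n/3$ and $n/3$: moving a vertex from a part of size $a$ to a part of size $a' \ge a$ changes the sum by $a' - a + 1 > 0$, so at the optimum one part equals $2n/3$ and the rest coalesce into a single part of size $n/3$. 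Hence $e(G_i) \le \binom{2n/3}{2} + \binom{n/3}{2} = \frac{5n^2 - 9n}{18} < \frac{5}{9}\binom{n}{2}$, a contradiction; so $v(C_i) > 2n/3$, and since $3 \mid n$ in fact $s_i \le n/3 - 1$. For (ii), since $C_i$ is a connected component $G_i$ has no edge between $V(C_i)$ and its complement, so $e(C_i) = e(G_i) - e(G_i[V \setminus V(C_i)]) \ge e(G_i) - \binom{s_i}{2} > \frac{5}{9}\binom{n}{2} - \binom{n/3}{2} > \frac{4}{9}\binom{n}{2}$, using $\binom{n/3}{2} < \frac{1}{9}\binom{n}{2}$.

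For (iii), I apply Theorem~\ref{erdosgallai} to the connected graph $C_i$ with $k = n/3$ and $N = v(C_i) = n - s_i$; it suffices to check that $e(C_i)$ exceeds both $\binom{2k-1}{2} = \binom{2n/3 - 1}{2}$ and $\binom{k-1}{2} + (k-1)(N - k + 1)$. The first follows from $e(C_i) > \frac{4}{9}\binom{n}{2} > \binom{2n/3 - 1}{2}$ (the difference of the last two is a positive linear term in $n$). For the second, the bound $e(C_i) > \frac{4}{9}\binom{n}{2}$ is on its own too weak when $N$ is close to $n$, so instead I use the sharper estimate $e(C_i) > \frac{5}{9}\binom{n}{2} - \binom{s_i}{2}$ from (ii) together with $N = n - s_i$; clearing denominators reduces the required inequality to $6 n s_i + 10 n > 9 s_i^2 + 9 s_i$, which holds because $9 s_i(s_i + 1) \le 3 n s_i$ for $s_i \le n/3 - 1$. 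Theorem~\ref{erdosgallai} then supplies a matching of size $n/3$ in $C_i$.

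For (iv), suppose for a contradiction that $C_1$ and $C_2$ share no edge. Since the edges of $C_i$ are pairs inside $V(C_i)$, disjointness of the edge sets and inclusion--exclusion on the families of pairs within $V(C_1)$ and within $V(C_2)$ give
\[
e(C_1) + e(C_2) \le \binom{n - s_1}{2} + \binom{n - s_2}{2} - \binom{w}{2}, \qquad w := |V(C_1) \cap V(C_2)|.
\]
Feeding in $e(C_i) > \frac{5}{9}\binom{n}{2} - \binom{s_i}{2}$, the identity $\binom{n - s_i}{2} + \binom{s_i}{2} = \binom{n}{2} - s_i(n - s_i)$, and the bound $w \ge v(C_1) + v(C_2) - n = n - s_1 - s_2$ yields
\[
\tfrac{8}{9}\binom{n}{2} > \binom{n - s_1 - s_2}{2} + s_1(n - s_1) + s_2(n - s_2).
\]
The right-hand side equals $\binom{n}{2} - \tfrac12\bigl((s_1 - s_2)^2 - (s_1 + s_2)\bigr)$, so the inequality is equivalent to $9(s_1 - s_2)^2 > n^2 - n + 9(s_1 + s_2)$; but $|s_1 - s_2| \le n/3 - 1$ gives $9(s_1 - s_2)^2 \le n^2 - 6n + 9 \le n^2 - n$, a contradiction. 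Hence $C_1$ and $C_2$ have a common edge.

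The main obstacle is part (iv). The naive consequence of edge-disjointness, $e(C_1) + e(C_2) \le \binom{n}{2}$, gives only $\frac{8}{9}\binom{n}{2} < \binom{n}{2}$ and so is useless; the proof must simultaneously exploit that the edges of $C_i$ are confined to the pairs within $V(C_i)$ (with $V(C_1)$ and $V(C_2)$ overlapping in more than $n/3$ vertices) and that $e(C_i)$ is in fact much larger than $\frac{4}{9}\binom{n}{2}$ unless $v(C_i)$ is close to $n$. Arranging for these two effects to offset one another exactly is the crux, and this is what the final one-variable inequality makes precise; by contrast (i)--(iii) rest only on the component-order bound and the Erd\H{o}s--Gallai theorem.
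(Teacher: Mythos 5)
Your proof is correct and takes essentially the same approach as the paper: parts (i) and (ii) bound the edges missed by the largest component, part (iii) applies the Erd\H{o}s--Gallai theorem, and part (iv) combines the inclusion--exclusion count of pairs inside $V(C_1)\cup V(C_2)$ with the edge-count lower bounds to reach a contradiction. The only differences are cosmetic: you work with the integer deficiencies $s_i$ and the exact term $\binom{s_i}{2}$ (invoking integrality, $s_i\le n/3-1$, for the final contradiction in (iv)), whereas the paper works with the proportions $\alpha=s_1/n$, $\beta=s_2/n$, over-estimates $\binom{\alpha n}{2}\le\alpha^2\binom{n}{2}$, and concludes from $(\alpha-\beta)^2>1/9$ versus $0\le\alpha,\beta<1/3$.
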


\begin{proof} In this proof we make repeated use of the fact that for $0 < x < 1$ we have $\binom{xn}{2} < x^2\binom{n}{2}$.
For (i), suppose for a contradiction that every connected component in $G_i$
has at most $2n/3$ vertices. Then we may form disjoint sets $A$ and $B$ such
that $V = A \cup B$, such that $A$ and $B$ can each be written as a union of
connected components of $G_i$, and such that $|A|, |B| \leq 2n/3$. We then have
$$e(G_i) \leq \binom{|A|}{2} + \binom{|B|}{2} \leq \binom{2n/3}{2} + \binom{n/3}{2} < \frac{5}{9} \binom{n}{2},$$
giving the desired contradiction to prove (i). From this (ii) follows immediately,
since the number of edges of $G_i$ which are not in $C_i$ is at most
$\binom{n-v(C_i)}{2} < \binom{n/3}{2} < \frac{1}{9}\binom{n}{2}$.

For (iii), let $x$ be such that $v(C_i) = (1-x)n$, so $x$ is the proportion of vertices of $V$
which are not in $C_i$. In particular $0 \leq x < \tfrac{1}{3}$ by (i).
Observe that at most $\binom{xn}{2} \leq x^2\binom{n}{2}$ edges of $G_i$ are not in $C_i$, so $C_i$ has more than $\left(\frac{5}{9} - x^2 \right) \binom{n}{2}$ edges.
It is easily checked that the inequality 
$$\left(\frac{5}{9} - x^2 \right) \binom{n}{2} > \max \left\{\binom{2 \cdot \frac{n}{3}-1}{2}, \binom{\tfrac{n}{3}-1}{2} + \left(\frac{n}{3}-1\right) \left((1-x)n-\frac{n}{3}+1\right) \right\}$$ 
holds for any $0 \leq x <\tfrac{1}{3}$,
so by Theorem~\ref{erdosgallai}
(with $(1-x)n$ and $n/3$ in place of $N$ and $k$ respectively) $C_i$ admits a matching
$M$ of size $n/3$ as claimed.

For (iv), fix $\alpha$ and $\beta$ so that $|V(C_1)| = (1-\alpha)n$ and
$|V(C_2)| = (1 - \beta) n$. So $0 \leq \alpha, \beta < 1/3$ and
$|V(C_1) \cap V(C_2)| \geq (1 - \alpha - \beta) n$. Similarly as before, at most
$\binom{\alpha n}{2} \leq \alpha^2\binom{n}{2}$ edges of $G_1$ are not in $C_1$, and at most
$\binom{\beta n}{2} \leq \beta^2\binom{n}{2}$ edges of $G_2$ are not in $C_2$. Now suppose for a
contradiction that $C_1$ and $C_2$ have no edges in common. Then we have
\begin{align*}
\left(\frac{5}{9}-\alpha^2\right)\binom{n}{2} + \left(\frac{5}{9} - \beta^2\right) \binom{n}{2}
& < e(C_1) + e(C_2) \\
& \leq \binom{v(C_1)}{2} + \binom{v(C_2)}{2} - \binom{|V(C_1) \cap V(C_2)|}{2} \\ 
& \leq \binom{(1-\alpha) n}{2} + \binom{(1-\beta) n}{2} - \binom{(1- \alpha - \beta)n}{2} \\
& = \left((1-\alpha)^2 + (1-\beta)^2 - (1-\alpha-\beta)^2\right)\binom{n}{2}\\
&\hspace{1cm} - \tfrac{n}{2}\left(\alpha(1-\alpha)+\beta(1-\beta)-(\alpha+\beta)(1-\alpha - \beta)\right)\\
& \leq (1-2\alpha\beta) \binom{n}{2}.
\end{align*}
So $1/9 < \alpha^2 + \beta^2 - 2 \alpha \beta = (\alpha - \beta)^2$, which implies that $|\alpha - \beta| > 1/3$, contradicting the fact that $0 \leq \alpha, \beta < 1/3$. We conclude that $C_1$ and $C_2$ must have an edge in common, as required.
\end{proof}

Turning to $3$-graphs, we prove the following lemma, the second part of which was the principal aim of this section. Note that the requirement that $3$ divides $n$ in Lemmas~\ref{graphmeet} and~\ref{fracmatch} is for simplicity only, and can easily be removed by (for example) using a fractional version of Theorem~\ref{erdosgallai}.

\begin{lemma}\label{fracmatch}
Let $\eps > 0$, and let $H$ be a $3$-graph on $n$ vertices with $\delta(H) > \frac{5}{9}\binom{n}{2}$. Suppose also that $3$ divides $n$. Then 
\begin{enumerate}[label=(\roman*)]
\item $H$ has a tightly-connected spanning subgraph $H'$ with $\delta(H') \geq \frac{4}{9}\binom{n}{2}$, and
\item $H'$ admits a perfect fractional matching (which is therefore a tightly-connected perfect fractional matching in $H$).
\end{enumerate} 
\end{lemma}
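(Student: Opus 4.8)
The plan is to prove (i) by studying the tight components of $H$ through the link graphs of its vertices, and then to derive (ii) from (i) via linear programming duality together with Lemma~\ref{graphmeet}(iii).

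For (i), fix a vertex $v$ and let $C_v$ be a largest connected component of $L_H(v)$; since $e(L_H(v)) = d_H(v) > \tfrac59\binom n2$, Lemma~\ref{graphmeet} applies to $L_H(v)$, and indeed to any pair of link graphs simultaneously. The point is that if $e, e'$ are edges of the connected graph $C_v$ then, choosing a sequence of edges $e = g_0, g_1, \dots, g_m = e'$ of $C_v$ in which consecutive edges are distinct and share a vertex (for instance a path in the line graph of $C_v$), the edges $\{v\} \cup g_0, \{v\}\cup g_1, \dots, \{v\}\cup g_m$ of $H$ form a tight walk, since consecutive ones meet in $v$ together with exactly one further vertex. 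Hence all edges $\{v\}\cup e$ with $e \in E(C_v)$ lie in one tight component $T_v$ of $H$. Given two vertices $u,v$, Lemma~\ref{graphmeet}(iv) applied to $L_H(u)$ and $L_H(v)$ yields an edge $f$ common to $C_u$ and $C_v$; then $\{u\}\cup f \in T_u$ and $\{v\}\cup f \in T_v$ share the pair $f$, so $T_u = T_v$. Thus a single tight component $T$ of $H$ satisfies $T = T_v$ for all $v$, and I would let $H'$ be the spanning subgraph of $H$ with edge set $T$. Since any tight walk of $H$ joining two edges of $T$ uses only edges of $T$, $H'$ is tightly connected; and $d_{H'}(v) \geq e(C_v) > \tfrac49\binom n2$ for each $v$ by Lemma~\ref{graphmeet}(ii), so $\delta(H') \geq \tfrac49\binom n2$.

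For (ii), I would use the fact that, by LP duality, the maximum total weight of a fractional matching in $H'$ equals the minimum total weight of a fractional vertex cover, i.e.\ of an assignment $y \colon V(H') \to \R_{\geq 0}$ with $\sum_{u \in e} y_u \geq 1$ for all $e \in E(H')$. Assigning $y_u = \tfrac13$ everywhere is such a cover, and summing the matching constraints over all $n$ vertices shows every fractional matching has weight at most $n/3$; hence it suffices to prove that every fractional vertex cover of $H'$ has total weight at least $n/3$, as this forces a perfect fractional matching. Given such a cover $y$, set $m = \min_u y_u$ and pick $v$ with $y_v = m$. By Lemma~\ref{graphmeet}(iii), $C_v$ contains a matching $M_v$ of size $n/3$; for each $\{a,b\} \in M_v$ we have $\{v,a,b\} \in E(H')$, so $y_a + y_b \geq 1 - m$. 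Summing over the $n/3$ pairwise disjoint edges of $M_v$, which cover $2n/3$ vertices, gives $\sum_{u \in V(M_v)} y_u \geq \tfrac n3(1-m)$, while the remaining $n/3$ vertices contribute at least $\tfrac n3 m$; hence $\sum_u y_u \geq \tfrac n3$.

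I expect the main obstacle to be part (ii): a direct construction of the fractional matching — say by superposing, over all $v$, a small multiple of $M_v$ lifted through $v$ — overloads each vertex $v$ by its own contribution, so the natural route is through the dual, where the decisive idea is to test a cover at a vertex of minimum weight, so that the matching supplied by Lemma~\ref{graphmeet}(iii) exactly balances the two parts of $\sum_u y_u$. (The hypothesis $\eps > 0$ plays no role.)
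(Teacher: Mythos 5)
Your proof is correct and follows essentially the same strategy as the paper's. In part~(ii) the paper invokes Farkas' lemma and derives a contradiction by choosing a vertex whose dual coordinate is maximal, whereas you bound the fractional-vertex-cover LP directly by testing a vertex of minimum cover weight; these are the same LP-duality argument seen from opposite sides, and both hinge on precisely the matching supplied by Lemma~\ref{graphmeet}(iii).
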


\begin{proof}
Write $V := V(H)$. For (i), observe that for any vertex $u \in V$, the link graph
$L(u)$ has $n$ vertices and more than $\frac{5}{9}\binom{n}{2}$ edges.
For each $u \in V$ let $C_u$ be the largest connected component of $L(u)$, and let
$C_u^*$ denote $\{e \in H: e-v \in C_u\}$, i.e.\ the edges of $C_u$ with $u$ added back in.
First observe that $C_u^*$ lies within a single tight component of $H$.
Next observe that
by Lemma~\ref{graphmeet}, for each vertex $u \in V$ we have $|v(C_u)| > 2n/3$
and $e(C_u) > \frac{4}{9}\binom{n}{2}$, and moreover
$C_u$ and $C_v$ have an edge in common for any $u, v \in V$. The latter implies
that $C_u^*$ and $C_v^*$ lie in the same tight component of $H$, and more generally
all the sets $C_u^*$ for $u \in V$ are contained within a single tight component
$H'$ of $H$. So $H'$ is a tightly-connected spanning subgraph $H'$ of $H$; since
$C_u^* \subseteq H'$ for any $u \in V$ we also have
$\delta(H') \geq \frac{4}{9}\binom{n}{2}$.

For (ii), suppose for a contradiction that there is no such matching. Then (after fixing some order of $V$) by a
standard application of Farkas' lemma there exists a vector $\ab \in \R^n$ such
that $\ab \cdot \1 > 0$ and $\ab \cdot \chi(e) \leq 0$ for every $e \in H'$
(where for a set $S \subset V$, $\chi(S)$ denotes the characteristic vector of $S$, whose $i$th coordinate is one if the $i$th vertex of $V$ is in $S$, and zero otherwise).
Fix such an $\ab$, and choose $u \in V$ for which $\ab \cdot \chi(\{u\})$ is maximal.
By Lemma~\ref{graphmeet}~(iii), $C_u$ contains a matching $M$ of size $n/3$.
Let $x_iy_i$ for $i \in [n/3]$ be the edges of $M$, and let
$z_1, \dots, z_{n/3}$ be the vertices of $V$ not covered by $M$. Define
$e_i := \{u, x_i, y_i\}$ and $S_i := \{x_i, y_i, z_i\}$ for each $i \in [n/3]$.
Then the sets $S_i$ partition $V$, so $\sum_{i \in [n/3]} \chi(S_i) = \chi(V) = \1$.
Moreover, since $M$ was a matching in $C_u$ each $e_i$ is an edge of $H'$, and by
choice of $u$ we have $\ab \cdot \chi(e_i) \geq \ab \cdot \chi(S_i)$ for each
$i \in [n/3]$. It follows that 
$$ 0 < \ab \cdot \1 = \sum_{i \in [n/3]} \ab \cdot \chi(S_i) \leq \sum_{i \in [n/3]} \ab \cdot \chi(e_i) \leq 0,$$
where the first and last inequalities hold by choice of $\ab$. This gives the desired contradiction.
\end{proof}

\section{Tight Cycles in Regular Slices} \label{sec:regandcycle}

As described in the introduction, our proof uses the recent `Regular Slice Lemma' of Allen, B\"ottcher, Cooley and Mycroft~\cite{ABCM}. This was derived from the Strong Hypergraph Regularity Lemma of R\"odl and Schacht~\cite{RS1}, and allows for the application of hypergraph regularity with less notational complexity. We introduce this lemma in Subsection~\ref{subsec:rhgraph}, but first give the necessary definitions and notation in Subsection~\ref{subsec:reg}. Most of these definitions are in the form in which they appear in~\cite{ABCM}, which in turn was based on the framework of R\"odl and Schacht~\cite{RS1, RS2}. 

\subsection{Hypergraph regularity} \label{subsec:reg}
A \emph{hypergraph} $H$ consists of a vertex set $V$ and a set of edges $E$, where each edge is a subset of $V$. In particular, $H$ is a \emph{complex} if $E$ is down-closed,
meaning that whenever $e\in E$ and $e'\subseteq e$ we have $e'\in E$. All complexes considered here have the property that $\{v\} \in E$ for every $v \in V$.
A \emph{$k$-complex} is a complex in which all edges have cardinality at most $k$.
Given a complex $\Hy$, we use $\Hy^{(i)}$ to denote the $i$-graph obtained by
taking all vertices of $\Hy$ and those edges of cardinality exactly $i$.

Let $\Part$ partition a vertex set $V$ into parts $V_1, \dots, V_s$. Then we say
that a subset $S \subseteq V$ is \emph{$\Part$-partite} if $|S \cap V_i| \leq 1$ for
every $i \in [s]$. Similarly, we say that a hypergraph $\Hy$ is \emph{$\Part$-partite} if
all of its edges are $\Part$-partite. In this case we refer to the parts of
$\Part$ as the \emph{vertex classes} of $\Hy$. A hypergraph $\Hy$ is \emph{$s$-partite} if there is some partition $\Part$ of $V(\Hy)$ into $s$ parts for which $\Hy$ is $\Part$-partite. 

Let $\Hy$ be a $\Part$-partite hypergraph.
Then for any $A \subseteq [s]$ we write $V_A$ for $\bigcup_{i \in A} V_i$. The
\emph{index} of a $\Part$-partite set $S \subseteq V$ is $i(S) := \{i \in [s] : |S \cap
V_i| = 1\}$. We write $\Hy_A$ to denote the collection of edges in $\Hy$ with
index $A$. So $\Hy_A$ can be regarded as an $|A|$-partite $|A|$-graph on vertex
set $V_A$, with vertex classes $V_i$ for $i \in A$. It is often convenient to refer to the subgraph induced by a set of vertex classes rather than with a given index; if $X$ is a $k$-set of vertex classes of $\Hy$ we write $\Hy_X$ for the $k$-partite $k$-uniform subgraph of $\Hy^{(k)}$ induced by $\bigcup X$, whose vertex classes are the members of~$X$. Note that $\Hy_X = \Hy_{\{i : V_i \in X\}}$. In a similar manner we write $\Hy_{X^<}$ for the $k$-partite hypergraph on vertex set $\bigcup X$ whose edge set is $\bigcup_{X' \subsetneq X} \Hy_X$. Note that if $\Hy$ is a complex, then $\Hy_{X^<}$ is a $(k-1)$-complex because $X$ is a $k$-set.

Let $i \geq 2$, let $\Part_i$ be a partition of a vertex set $V$ into $i$ parts,
let $H_i$ be any $\Part_i$-partite $i$-graph, and let $H_{i-1}$ be any $\Part_i$-partite
$(i-1)$-graph, on the common vertex set $V$. We denote by
$K_i(H_{i-1})$ the $\Part_i$-partite $i$-graph on $V$ whose edges are all $\Part_i$-partite $i$-sets in $V$ which are supported on $H_{i-1}$ (i.e.~induce a copy of the complete $(i-1)$-graph~$K_i^{i-1}$
on~$i$ vertices in~$H_{i-1}$).
The \emph{density of $H_i$ with respect to
$H_{i-1}$} is then defined to be \[ d(H_i|H_{i-1}):= \frac{|K_i(H_{i-1})\cap
H_i|}{|K_i(H_{i-1})|}\]
 if
$|K_i(H_{i-1})|>0$. For convenience we take
$d(H_i|H_{i-1}):=0$ if $|K_i(H_{i-1})| = 0$. So $d(H_i|H_{i-1})$ is the proportion of
$\Part_i$-partite copies of
$K^{i-1}_i$ in $H_{i-1}$ which are also edges of $H_i$. When $H_{i-1}$ is clear from the
context,
we simply refer to $d(H_i | H_{i-1})$ as the \emph{relative density of $H_i$}.
More generally, if $\Qb:=(Q_1,Q_2,\ldots,Q_r)$ is a collection
of~$r$ not necessarily disjoint subgraphs of~$H_{i-1}$, we define $K_i(\Qb):=\bigcup_{j=1}^r K_i(Q_j)$
and \[d(H_i |\Qb):= \frac{|K_i(\Qb)\cap H_i|}{|K_i(\Qb)|}\] if
$|K_i(\Qb)|>0$. Similarly as before we take $d(H_i |\Qb):=0$ if
$|K_i(\Qb)| =0$. 
We say that $H_i$ is \emph{$(d_i,\eps,r)$-regular with respect
to~$H_{i-1}$} if we have $d(H_i|\Qb) = d_i \pm \eps $ for every $r$-set $\Qb$
of subgraphs of $H_{i-1}$ such that $|K_i(\Qb)| > \eps |K_i(H_{i-1})|$. We
often refer to $(d_i,\eps,1)$-regularity simply as
\emph{$(d_i,\eps)$-regularity}; also, we say simply that $H_i$ is \emph{$(\eps,r)$-regular with respect
to~$H_{i-1}$} to mean that there exists some $d_i$ for which $H_i$ is \emph{$(d_i,\eps,r)$-regular with respect
to~$H_{i-1}$}. Finally, given an $i$-graph $G$
whose vertex set contains that of $H_{i-1}$, we say that $G$ is
\emph{$(d_i,\eps,r)$-regular with respect to~$H_{i-1}$} if the $i$-partite subgraph of
$G$ induced by the vertex classes of $H_{i-1}$ is $(d_i,\eps,r)$-regular with respect to
$H_{i-1}$. Similarly as before, when $H_{i-1}$ is clear from the context,
we refer to the density of this $i$-partite subgraph of $G$ with respect
to $H_{i-1}$ as the \emph{relative density of~$G$}.

Now let $\Hy$ be an $s$-partite $k$-complex on
vertex classes $V_1, \dots, V_s$, where $s \geq k \geq 3$.
So if $e \in \Hy^{(i)}$ for some $2 \leq i \leq k$, then the vertices of $e$ induce a copy of
$K^{i-1}_i$ in $\Hy^{(i-1)}$. This means that for any index $A \in \binom{[s]}{i}$ the
density $d(\Hy^{(i)}[V_A]|\Hy^{(i-1)}[V_A])$ can be regarded as the proportion of
`possible edges' of $\Hy^{(i)}[V_A]$ which are indeed edges. (Here a `possible edge' is a
subset of $V(\Hy)$ of index $A$ all of whose proper subsets are edges of~$\Hy$).
We therefore say that~$\Hy$ is \emph{$(d_k,\dots,d_2,\eps_k,\eps,r)$-regular} if
\begin{enumerate}[label=(\alph*)]
 \item for any $2 \leq i \leq k-1$ and any $A \in
  \binom{[s]}{i}$, the induced subgraph $\Hy^{(i)}[V_A]$ is
  $(d_i,\eps)$-regular with respect to~$\Hy^{(i-1)}[V_A]$, and
 \item for any $A \in \binom{[s]}{k}$, the induced subgraph 
  $\Hy^{(k)}[V_A]$ is $(d_k, \eps_k, r)$-regular with respect to~$\Hy^{(k-1)}[V_A]$.
\end{enumerate}
So each constant $d_i$ approximates the relative density of each subgraph $\Hy^{(i)}[V_A]$ for $A \in \binom{[s]}{i}$ for which $\Hy^{(i)}[V_A]$ is non-empty.
For a $(k-1)$-tuple $\mathbf{d} = (d_k, \dots, d_2)$ we write
$(\mathbf{d},\eps_k,\eps,r)$-regular to mean
$(d_k,\dots,d_2,\eps_k,\eps,r)$-regular. 
A regular complex is the
correct notion of `approximately random' for hypergraph regularity.

\subsection{Regular slices and reduced $k$-graphs}\label{subsec:rhgraph}
The Regular Slice Lemma says that any $k$-graph $G$ admits a
`regular slice'. This is a multipartite $(k-1)$-complex $\cJ$ whose vertex
classes have equal size, which is regular, and which moreover has the property that $G$ is regular with
respect to $\cJ$. The first two of these conditions are formalised in the
following definition: we say that a $(k-1)$-complex $\cJ$ is
\emph{$(t_0,t_1,\eps)$-equitable} if it has the following properties.
\begin{enumerate}[label=(\alph*)]
  \item $\cJ$ is $\Part$-partite for some $\Part$ which partitions $V(\cJ)$ into
  $t$ parts, where $t_0\le t\le t_1$, of equal size. We
  refer to $\Part$ as the \emph{ground partition} of $\cJ$, and to the parts of $\Part$ as the
  \emph{clusters} of $\cJ$.
  \item There exists a \emph{density vector} $\mathbf{d}=(d_{k-1},\ldots,d_2)$
  such that for each $2\le i\le k-1$ we have $d_i\ge 1/t_1$ and
  $1/d_i\in\NATS$, and the $(k-1)$-complex $\cJ$ is $(\mathbf{d},\eps,\eps,1)$-regular.
\end{enumerate}
For any $k$-set $X$ of clusters of $\cJ$, we write $\hat{\cJ}_X$ for the
$k$-partite $(k-1)$-graph $\cJ^{(k-1)}_{X^<}$; we refer to $\hat{\cJ_X}$ as a
\emph{polyad}.
Given a $(t_0,t_1,\eps)$-equitable $(k-1)$-complex $\cJ$ and a
$k$-graph $G$ on $V(\cJ)$, we say that \emph{$G$ is $(\eps_k,r)$-regular with respect to a $k$-set $X$ of clusters of $\cJ$} if there exists some $d$ such
that $G$ is $(d,\eps_k,r)$-regular with respect to the polyad $\hat{\cJ_X}$. We
also write $\reld_\cJ(X)$ for the relative density of $G$ with respect to
$\hat{\cJ_X}$, or simply $\reld(X)$ if $\cJ$ is clear from the context, which will usually be the case in applications. 

We can now present the definition of a regular slice.

\begin{defn}[Regular slice] Given $\eps,\eps_k>0$,
$r,t_0,t_1\in\NATS$, a $k$-graph $G$ and a $(k-1)$-complex $\cJ$ on $V(G)$, we
call $\cJ$ a $(t_0,t_1,\eps,\eps_k,r)$-regular slice for $G$ if
$\cJ$ is $(t_0,t_1,\eps)$-equitable and $G$ is $(\eps_k,r)$-regular with respect to all but at most
$\eps_k\binom{t}{k}$ of the $k$-sets of clusters of $\cJ$, where $t$ is the number of clusters of $\cJ$.
\end{defn}

If we specify the density vector $\mathbf{d}$ and the number of clusters $t$ of an equitable complex or a regular slice, then it is not necessary to specify $t_0$ and $t_1$ (since the only role of these is to bound $\mathbf{d}$ and $t$). In this situation we write that $\cJ$ is $(\cdot, \cdot, \eps)$-equitable, or is a $(\cdot,\cdot,\eps,\eps_k,r)$-regular slice for $G$. 

Given a regular slice $\cJ$ for a $k$-graph $G$, we use a weighted reduced graph to record the relative densities $\reld(X)$ for $k$-sets $X$ of clusters of $\cJ$; this is defined as follows.

\begin{defn}[Weighted reduced $k$-graph]
 Given a $k$-graph $G$ and a $(k-1)$-complex $\cJ$ on $V(G)$ which is a $(t_0,t_1,\eps,\eps_k,r)$-regular slice for $G$, we define the \emph{weighted reduced $k$-graph of $G$}, denoted $R(G)$, to be the complete weighted $k$-graph whose
 vertices are the clusters of $\cJ$, and where each edge $X$ is given weight $\reld(X)$
 (so in particular, the weight is in $[0,1]$). Note that $R(G)$ does depend on $\cJ$, but this will always be clear from the context.
\end{defn}

Essentially, the Regular Slice Lemma states that for any $k$-graph $G$ we may choose a regular slice $\cJ$ for $G$ such that the weighted reduced $k$-graph of $G$ with respect to $\cJ$ inherits various properties from $G$. The inherited properties of the full version of the lemma include densities of small subgraphs, degree conditions, and vertex neighbourhoods. However, for our purposes here we only require $R_J(G)$ to inherit vertex degree conditions from $G$, so we omit the other properties and refer the reader to~\cite{ABCM} for the full statement. To describe inheritance of degree conditions, it is easiest to use the following notion of relative degree.

Let $G$ be a $k$-graph on $n$ vertices. For a vertex $v \in V(G)$, the \emph{relative degree of $v$ in $G$} is defined to be $\reldeg(v;G) := d(v)/\binom{n-1}{k-1}$. Similarly, if $G$ is instead a weighted $k$-graph with weight function $\reld$, then we define 
\[\reldeg(v;G) := \frac{\sum_{e \in G : S \subseteq e} \reld(e)}{\binom{n-1}{k-1}}\,.\]
In other words, $\reldeg(v;G)$ is the (weighted) proportion of $k$-sets of vertices of $G$ extending $S$ which are in fact edges of $G$. Finally, for any set $S \subseteq V(G)$ we define the \emph{mean relative degree of $S$ in $G$}, denote $\reldeg(S; G)$, to be the mean average of $\reldeg(v;G)$ over
all $v \in S$.

We can now give the form of the Regular Slice Lemma which we use in the proof of Theorem~\ref{main}. 

\begin{lemma}\cite[Lemma~6 (Regular Slice Lemma)]{ABCM}\label{lem:simpreg}
Let $k \geq 3$ be a fixed integer. For all positive integers $t_0$,
positive~$\eps_k$ and all functions $r: \NATS \rightarrow \NATS$ 
and $\eps: \NATS \rightarrow (0,1]$, there are integers~$t_1$ 
and~$n_2$ such that the following holds for all $n \ge n_2$ which are divisible 
by~$t_1!$. 
Let $G$ be a $k$-graph whose vertex set $V$ has size $n$. 
Then there exists a $(k-1)$-complex
$\cJ$ on $V$ which is a
$(t_0,t_1,\eps(t_1),\eps_k,r(t_1))$-regular slice for
$G$ such that for each cluster $Y$ of $\cJ$, we have $\reldeg(Y; R(G)) = \reldeg(Y; G) \pm \eps_k.$
\end{lemma}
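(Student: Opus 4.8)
The plan is to obtain Lemma~\ref{lem:simpreg} as an immediate specialisation of the full Regular Slice Lemma of Allen, B\"ottcher, Cooley and Mycroft~\cite{ABCM}. That lemma takes as input the same data --- a uniformity $k$, an integer $t_0$, a positive real $\eps_k$, and functions $r$ and $\eps$ --- and, for all $n$ divisible by $t_1!$ and at least some $n_2$ (both returned by the lemma), produces a $(t_0,t_1,\eps(t_1),\eps_k,r(t_1))$-regular slice $\cJ$ for the given $k$-graph $G$ which simultaneously inherits several structural features of $G$: the densities of small subgraphs, the degree sequence, and vertex neighbourhoods. The first step is simply to invoke this result; then I would discard every conclusion except the one asserting inheritance of degrees, since only that is needed here.

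The second step is to check that the degree-inheritance conclusion of the \cite{ABCM} lemma, which is phrased there in terms of mean relative degrees of (possibly larger) sets of clusters, yields exactly the single-cluster statement $\reldeg(Y;R(G)) = \reldeg(Y;G) \pm \eps_k$ required here: this follows on taking the relevant set of clusters to be the singleton $\{Y\}$. If the error term in \cite{ABCM} is some constant multiple of $\eps_k$ (or is measured against $\eps$ rather than $\eps_k$), one first applies that lemma with $\eps_k$ and $\eps$ replaced by suitably smaller quantities --- a fixed fraction of $\eps_k$, respectively a scaled-down copy of the function $\eps$ --- so that the resulting error is at most $\eps_k$; this only changes the output parameters $t_1$ and $n_2$, and costs nothing else. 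The output $t_1$ and $n_2$ of the present lemma are then taken to be those returned by \cite{ABCM} under these modified inputs.

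Since the statement is a restriction of a known result, there is no genuinely combinatorial obstacle; the only work is bookkeeping, namely pinning down the precise form of the degree clause in \cite{ABCM} and matching its parameter conventions (the role of $\eps$ versus $\eps_k$, the normalisation in the definition of $\reldeg$, and the divisibility requirement on $n$) with those used in this paper. That matching is where I would expect to spend essentially all of the (modest) effort.
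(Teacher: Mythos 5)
Your approach is exactly the paper's: Lemma~\ref{lem:simpreg} is presented in the paper as a direct citation of Lemma~6 of \cite{ABCM}, with the text explicitly noting that the full version also yields inheritance of subgraph densities and vertex neighbourhoods, and that here only the vertex-degree clause is retained. Your additional parameter-matching remarks are sensible bookkeeping but do not diverge from the paper's route.
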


Having obtained a regular slice $\cJ$ from the Regular Slice Lemma, we will work within $k$-tuples of clusters of $\cJ$ with respect to which $G$ is both regular and dense. The following definition is useful for keeping track of such $k$-tuples.

\begin{defn}[The $d$-reduced $k$-graph] \label{def:redgraph}
Let $G$ be a $k$-graph and let $\cJ$ be a
$(t_0,t_1,\eps,\eps_k,r)$-regular slice for $G$. Then for $d>0$ we
define the \emph{$d$-reduced $k$-graph of $G$}, denoted $R_d(G)$, to be the $k$-graph whose vertices are the clusters of $\cJ$ and whose edges are all $k$-sets $X$ of clusters of
$\cJ$ such that $G$ is $(\eps_k, r)$-regular with respect to $X$ and $\reld(X) \geq d$. As before, $R_d(G)$ does depend on $\cJ$, but this will always be clear from the context.
\end{defn}

The final lemma we need for working with regular slices is the following, which states that most vertex degrees in the $d$-reduced $k$-graph $R_d(G)$ are similar to those of the weighted reduced $k$-graph $R(G)$ (this is a consequence of the fact that few edges lie in $k$-tuples of clusters which are either irregular or sparse). Again, this is a special case of the full statement from~\cite{ABCM}, which shows that the same is true for a wide range of degree conditions, and also for densities of small subgraphs.

\begin{lemma}\cite[Lemma~8]{ABCM} \label{reducedd+d} 
Let $G$ be a $k$-graph and let $\cJ$ be a
$(t_0,t_1,\eps,\eps_k,r)$-regular slice for~$G$ with $t$
clusters. Then for any cluster $Y$ of $\cJ$ we have
\begin{equation*} \label{eq:reduceddegree} 
\reldeg(Y;R_{d}(G)) \ge \reldeg(Y;R(G)) - d - \zeta(Y),
\end{equation*}
where $\zeta(Y)$ is defined to be the proportion of $k$-sets of clusters $Z$
satisfying $Y \in Z$ which are not $(\eps_k, r)$-regular with respect to $G$.
\end{lemma}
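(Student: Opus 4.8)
The plan is a direct counting argument comparing the (unweighted) degree of $Y$ in $R_d(G)$ with the weighted degree of $Y$ in $R(G)$. The first thing to observe is that $R(G)$ and $R_d(G)$ have the same vertex set, namely the $t$ clusters of $\cJ$, so in both definitions of relative degree the normalising factor is $\binom{t-1}{k-1}$. Hence it suffices to show that the weighted degree $\sum_{X \ni Y} \reld(X)$ in $R(G)$ — where the sum runs over the $\binom{t-1}{k-1}$ $k$-sets $X$ of clusters containing $Y$, since $R(G)$ is complete — exceeds the number of edges of $R_d(G)$ through $Y$ by at most $(d + \zeta(Y))\binom{t-1}{k-1}$.

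To do this I would partition the $k$-sets $X$ of clusters with $Y \in X$ into three classes: (a) those for which $G$ is not $(\eps_k,r)$-regular with respect to $X$; (b) those for which $G$ is $(\eps_k,r)$-regular with respect to $X$ but $\reld(X) < d$; and (c) those for which $G$ is $(\eps_k,r)$-regular with respect to $X$ and $\reld(X) \ge d$. By definition of $\zeta(Y)$ there are exactly $\zeta(Y)\binom{t-1}{k-1}$ sets in class (a), and by Definition~\ref{def:redgraph} the sets in class (c) are precisely the edges of $R_d(G)$ containing $Y$, so there are $\reldeg(Y;R_d(G))\binom{t-1}{k-1}$ of them. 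Since every weight $\reld(X)$ lies in $[0,1]$, the total contribution to $\sum_{X \ni Y} \reld(X)$ from class (a) is at most $\zeta(Y)\binom{t-1}{k-1}$ and from class (c) is at most $\reldeg(Y;R_d(G))\binom{t-1}{k-1}$; for class (b), each such $X$ contributes less than $d$ and there are at most $\binom{t-1}{k-1}$ of them, so the contribution is at most $d\binom{t-1}{k-1}$. Adding these three bounds, dividing through by $\binom{t-1}{k-1}$, and rearranging yields exactly the stated inequality.

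There is essentially no serious obstacle here: the result is a routine consequence of the facts that the weights of a reduced $k$-graph lie in $[0,1]$ and that $R(G)$ and $R_d(G)$ live on the same vertex set, combined with the bookkeeping of the three types of $k$-tuple of clusters. The only points requiring a little care are to confirm that the two relative degrees are taken with the same normalisation, so that the comparison introduces no spurious scale factor, and to recall that $\reld(X)$ is defined for \emph{every} $k$-set $X$ of clusters — including irregular or sparse ones — as the relative density of $G$ with respect to the polyad $\hat{\cJ}_X$, so that the sum $\sum_{X\ni Y}\reld(X)$ is meaningful and each term is genuinely at most $1$.
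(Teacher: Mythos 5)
Your proof is correct, and the accounting is clean: the three classes of $k$-sets containing $Y$ (irregular; regular but sparse; regular and dense) partition all $\binom{t-1}{k-1}$ of them, each weight $\reld(X)$ lies in $[0,1]$ by definition of the weighted reduced $k$-graph, and both relative degrees are normalised by the same $\binom{t-1}{k-1}$ since $R(G)$ and $R_d(G)$ share the cluster set. Note that the paper itself does not prove this statement but cites it as Lemma~8 of~\cite{ABCM}; your argument is the natural (and essentially unique) counting proof one would expect to find there, so there is nothing to reconcile.
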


\subsection{Cycle Embedding Lemma}

Having obtained a regular slice for a $k$-graph $G$, we can find tight cycles in $G$ using the Cycle Embedding Lemma from~\cite{ABCM}. This shows that, given a tightly-connected fractional matching $M$ in the $d$-reduced $k$-graph of $G$, we can find a tight cycle $C$ such that for each cluster $X$ of $\cJ$, the proportion of vertices of $X$ covered by $C$ is close to the combined weight in $M$ of edges including $X$. Loosely speaking, this was proved by `winding around' the clusters of each edge of $M$ to form a long tight path, before using the fact that $M$ is tightly connected to extend the path to the clusters of the next edge of $M$, and so forth.

\begin{lemma}\cite[Lemma~9 (Cycle Embedding Lemma)]{ABCM}\label{lem:emb}
  Let $k,r,n_1,t$ be positive integers, and $\psi,d_2,\ldots,d_k,\eps,\eps_k$ be
  positive constants such that $1/d_i \in \NATS$ for each $2 \le i \le k-1$, and such
  that $1/n_1 \ll 1/t$,
\[\frac{1}{n_1} \ll
  \frac{1}{r},\eps\ll\eps_k,d_2,\ldots,d_{k-1}\quad\text{ and }\quad\eps_k\ll
  \psi,d_k,\frac{1}{k}\,.\] Then the following holds for all integers $n\ge
  n_1$. Let~$G$ be a $k$-graph on~$n$ vertices, and $\cJ$ be a $(\cdot,\cdot,\eps,\eps_k,r)$-regular slice for $G$ with $t$ clusters and density vector $(d_{k-1},\ldots,d_{2})$. Suppose that
  $R_{d_k}(G)$ contains a tightly connected fractional matching with total weight~$\mu$. Then $G$ contains a tight cycle of length~$\ell$ for every
      $ \ell\le(1-\psi)k\mu n/t$ that is divisible by~$k$.
\end{lemma}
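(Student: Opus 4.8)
The plan is to construct the tight cycle by stringing together one ``long'' tight path for each edge of the given fractional matching, joined up by ``short'' connecting tight paths obtained from tight connectivity, with everything embedded inside the regular dense $k$-tuples of clusters recorded by $R_{d_k}(G)$.

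\noindent\textbf{Building block.} First I would establish an embedding statement for tight paths inside a single regular $k$-tuple. If $X=\{X_1,\dots,X_k\}$ is an edge of $R_{d_k}(G)$, then $G$ is $(\eps_k,r)$-regular with relative density at least $d_k$ with respect to the polyad $\hat{\cJ}_X$, which is itself supported on the regular $(k-1)$-complex $\cJ$. By the counting and extension lemmas for regular complexes (the standard machinery behind the hypergraph regularity method, as developed in~\cite{ABCM}), one can greedily build a tight path winding cyclically through $X_1,X_2,\dots,X_k,X_1,\dots$: at each step the last $k-1$ chosen vertices form a ``typical'' edge of $\hat{\cJ}_X$ with many common neighbours in the next cluster that also complete an edge of $G$, so the path extends. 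The hierarchy of constants in the lemma ensures this yields tight paths of any length divisible by $k$ up to roughly $(1-\psi/2)k|X_1|$, using about $1/k$ of their vertices from each cluster of $X$, and one may additionally insist that the path avoids a prescribed small set of previously used vertices and that its two end $(k-1)$-tuples are typical. The impossibility of using essentially all $k|X_1|$ vertices of the clusters is exactly what produces the factor $(1-\psi)$ in the statement.

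\noindent\textbf{Assembly.} Let the positive-weight edges of the tightly-connected fractional matching be $e_1,\dots,e_s$ with weights $w_1,\dots,w_s$ summing to $\mu$, all lying in a common tight component $T$ of $R_{d_k}(G)$. Inside each $e_i$, embed with the building block a tight path $P_i$ of length close to $\ell w_i/\mu$ (divisible by $k$); since $\ell\le(1-\psi)k\mu n/t$, the fractional matching condition $\sum_{i:X\in e_i}w_i\le 1$ forces each cluster $X$ to receive at most about $(1-\psi)n/t$ vertices in total, leaving ample room for both the regularity argument and the connections. Now fix a cyclic order of $e_1,\dots,e_s$; for each $i$, tight connectivity of $T$ gives a tight walk $e_i=f_0,f_1,\dots,f_{L_i}=e_{i+1}$ in $R_{d_k}(G)$ with $L_i$ bounded in terms of $t$ alone, and we embed a short tight path $Q_i$ that leaves the end of $P_i$, winds in turn through the clusters of $f_1,f_2,\dots$, and arrives at the start of $P_{i+1}$, using $O(1)$ fresh vertices per step and $O_t(1)$ in total, disjoint from all previous choices. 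Then $P_1Q_1P_2Q_2\cdots P_sQ_s$ closes into a tight cycle; trimming the $P_i$ to compensate for the negligibly few vertices used by the $Q_i$, and using the $Q_i$ to correct the winding position modulo $k$, makes its length exactly $\ell$, which is possible precisely because $k\mid\ell$.

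\noindent\textbf{Main obstacle.} The delicate point is the interface between the long paths $P_i$ and the connecting paths $Q_i$: one must terminate the greedy embedding inside one regular $k$-tuple at a $(k-1)$-tuple that is genuinely extendable into the clusters of the next $k$-tuple along the tight walk, and then chain such single-vertex extensions through a whole sequence of overlapping regular $k$-tuples, each imposing its own typicality requirement. Making the extension lemma cope with these several simultaneous constraints — by discarding at each stage the few atypical vertices — is the real content; once this flexibility is available, the concatenation and the length-and-divisibility bookkeeping are routine.
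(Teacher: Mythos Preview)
Your sketch is essentially the intended argument, and it matches the paper's own one-sentence summary of how the lemma is established: build long tight paths by winding through the clusters of each matching edge, then use tight connectivity to link consecutive paths. Note, however, that the paper does not actually prove this lemma; it is quoted verbatim as Lemma~9 from~\cite{ABCM}, so there is no in-paper proof to compare against beyond that informal description.
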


\section{Proof of Theorem~\ref{main}}\label{sec:mainproof}

Fix $0 < \eta < 1$, and choose $d_3 = \eta/20$ and $\psi = \eta/2$.
Let $\eps_3 \leq \eta^2/10^4$ be sufficiently small to apply Lemma~\ref{lem:emb}
with $k=3$. Also choose functions $\eps(t)$ and $r(t)$ such that for any
$t \in \NATS$ and $d_2$ with $1/d_2 \in \NATS$ we may apply Lemma~\ref{lem:emb}
with $r(t)$ and $\eps(t)$ in place of $r$ and $\eps$ respectively. Now let $t_0 = \max\{100/\eta, 4/\sqrt{\eps_3}\}$, and apply
Lemma~\ref{lem:simpreg} with inputs $t_0, \eps_3, r(.)$ and $\eps(.)$ to obtain
$t_1$ and $n_2$.
For the rest of the proof we write $r$ and $\eps$ for $r(t_1)$
and $\eps(t_1)$ respectively. Finally, choose $n_1 \geq n_2$ sufficiently large
to apply Lemma~\ref{lem:emb} with $t_1$ in place of $t$
and all other constants as above.

Set $n_0 := \max\{n_1 +t_1!, 4t_1!/\eta\}$, and let $H$ be a $3$-graph on $n \geq n_0$ vertices with
$\delta(H) \geq \left(\tfrac{5}{9} + \eta \right)\binom{n}{2}$. It then suffices
to construct a tight cycle in $H$ covering all but at most $\eta n$ vertices of $H$.
For this we begin by arbitrarily deleting up to $t_1!$ vertices of $H$ so that
the number $n'$ of undeleted vertices is divisible by $t_1!$. Note that $n' \geq n_1$,
and let $G$ be the subgraph of $H$ induced by the undeleted vertices. 

We now apply Lemma~\ref{lem:simpreg} to $G$ to obtain a $2$-complex~$\cJ$ which is a
$(t_0,t_1,\eps,\eps_3,r)$-regular slice for $G$ with the property that for any cluster
$Y$ of $\cJ$ we have $\reldeg(Y; R(G)) \geq \reldeg(Y; G) - \eps_3$ (where $R(G)$
is the weighted reduced $3$-graph of $G$). Let $t$ be the number of clusters of $\cJ$.
Then by definition of a regular slice there are at most $\eps_3 \binom{t}{3}$ triples
of clusters of $\cJ$ with respect to which $G$ is not $(\eps_3, r)$-regular, and so all
but at most $\tfrac{1}{2}\sqrt{\eps_3} t$ clusters are \emph{good}, meaning that they lie in fewer
than $2\sqrt{\eps_3} \binom{t}{2}$ such triples. Let~$\C$ be the set of all good clusters,
so $|\C| \geq (1-\tfrac{1}{2}\sqrt{\eps_3}) t$. By arbitrarily removing at most two clusters from $|\C|$ we may assume additionally that $3$ divides $|\C|$, and since $\tfrac{1}{2}\sqrt{\eps_3} t \geq \tfrac{1}{2}\sqrt{\eps_3} t_0 \geq 2$ we still have $|\C| \geq (1-\sqrt{\eps_3}) t$.

Observe that $\delta(G) \geq \delta(H) - t_1! n \geq (\tfrac{5}{9} + \tfrac{4\eta}{5} )\binom{n}{2}$,
so certainly we have $\reldeg(Y; G) \geq \tfrac{5}{9} + \tfrac{4\eta}{5}$.
Since $\eps_3 \leq \tfrac{\eta}{5}$ it follows that
$\reldeg(Y; R(G)) \geq \tfrac{5}{9} + \tfrac{3\eta}{5}$.
We now consider $R_{d_3}(G)$ (the ${d_3}$-reduced $3$-graph of $G$ with respect to $\cJ$);
by Lemma~\ref{reducedd+d} we find that for any cluster $Y \in \C$ we have
\begin{equation*} 
\reldeg(Y; R_{d_3}(G)) \geq \reldeg(Y; R(G)) - d_3 - \frac{2\sqrt{\eps_3} \binom{t}{2}}{\binom{t-1}{2}} \geq \frac{5}{9} + \frac{2\eta}{5},
\end{equation*}
where the second inequality holds since $\binom{t}{2}/\binom{t-1}{2} \leq 2$
(as $t\ge t_0 >100$). It follows that the subgraph $R'$ of $R_{d_3}(G)$ induced by $\C$ has the property that for any
cluster $Y \in \C$ we have 
\begin{equation*} 
\reldeg(Y; R') \geq \frac{5}{9} + \frac{2\eta}{5} - \frac{\sqrt{\eps_3} t^2}{\binom{t-1}{2}} \geq \frac{5}{9} + \frac{\eta}{5}, 
\end{equation*}
or, equivalently,
$$
\delta(R') \geq \left(\frac{5}{9} + \frac{\eta}{5} \right)\binom{|\C|-1}{2}
\ge \frac{5}{9}\binom{|\C|}{2},
$$
where the second inequality holds since $|\C| \geq t_0/2 = 50/\eta$.
We may therefore apply Lemma~\ref{fracmatch} (with $|\C|$ in place of $n$) to obtain a tightly-connected perfect
fractional matching $M$ in $R'$. So $M$ is a tightly-connected matching in $R_d(G)$
of total weight $|\C|/3 \ge (1-\sqrt{\eps_3}) \tfrac{t}{3}$. By Lemma~\ref{lem:emb}
it follows that $G$ (and therefore $H$) admits a tight cycle of length at least
$(1-\psi) \cdot 3 \cdot (1-\sqrt{\eps_3})\tfrac{t}{3} \cdot \tfrac{n'}{t} - 3 = (1-\psi)(1-\sqrt{\eps_3}) n' - 3 \geq (1-\eta) n$. \endproof

\end{document}